\newtheorem{thm}{Theorem}
\theoremstyle{definition}
\newtheorem{defn}{Definition}
\newtheorem{lem}{Lemma}
\begin{document}

\date{}


\title{\textcolor{black}{On  the  absolute irreducibility of hyperplane sections of generalized Fermat varieties in $\Bbb{P}^3$ and 
the conjecture on exceptional APN functions: the Kasami-Welch degree case}}

\author{Moises  Delgado\thanks{Department of Mathematics,
University of Puerto Rico, \textcolor{black}{Cayey} Campus, San Juan PR,
 USA. moises.delgado@uprrp.edu.}\\
Heeralal  Janwa \thanks{Department of Mathematics, University of
Puerto Rico, Rio Piedras Campus, San Juan PR, USA.
hjanwa@uprrp.edu}} \maketitle









\begin{abstract}
Let $f$ be a function on a finite field $F$. The decomposition of the
generalized Fermat variety $X$ defined by
the  multivariate polynomial of degree $n$,
$\phi(x,y,z)=f(x)+f(y)+f(z)$ in $\Bbb{P}^3(\overline{\mathbb{F}}_2)$,
 plays a crucial  role in the study of almost perfect non-linear (APN) functions
and exceptional APN functions. Their structure depends fundamentally on the Fermat varieties corresponding to the  monomial  functions of exceptional degrees $n=2^k+1$ and
$n=2^{2k}-2^k+1$ (Gold and Kasami-Welch numbers, respectively).   Very important
results for these have been obtained by Janwa, McGuire and Wilson in
\cite{Janwa:McGuire:Wilson:AJA:95,Janwa:Wilson:LNCS:93}.
In this paper we study $X$ related to the Kasami-Welch degree
monomials and its decomposition into
absolutely irreducible components. We show that, in this
decomposition, the components
intersect transversally at a singular point.

 This structural fact implies that the corresponding generalized
Fermat hypersurfaces, related to Kasami-Welch degree polynomial
families, are absolutely irreducible. In particular, we prove that if $f(x)=x^{2^{2k}-2^k+1}+h(x)$,
where $\deg(h)\equiv 3{\pmod 4}$, then the corresponding APN multivariate hypersurface is absolutely irreducible, and hence $f(x)$ is not exceptional APN function. We also prove conditional result in the case when ${\rm  deg}(h)\equiv 5{\pmod 8}$. Since for odd degree $f(x)$, the conjecture needs to be resolved only for the Gold degree and the Kasami-Welch degree cases our results contribute substantially
to the proof of the  conjecture on exceptional APN functions---in the
hardest case: the Kasami-Welch degree. 
\end{abstract}

\noindent
{\bf  Keywords:}   { almost perfect nonlinear (APN),  Exceptional APN function conjecture, cyclic codes, Deligne estimate,  Lang-Weil estimate, Ghorpade-Lachaud estimate, absolutely
irreducible polynomial, Fermat variety, CCZ-equivalence, EA-equivalence,  Gold function, Kasami function}

\noindent {\small {\bf 2000 Mathematics Subject Classification}:  94A60, 20C05, 05B10, 11T71, 11G20, 11G25, 12E20, 14E15, 14G15, 14G20, 14H25, 14N10, 14N15, 13P10}

\section{Introduction}
\textcolor{black}{
\textcolor{black}{An almost perfect nonlinear (APN) function  (necessarily  a polynomial function)  on a finite field   $\mathbb{F}$ is a non-linear
 function  that is  very useful in cryptography because of its excellent
resistance to differential cryptanalysis as was demonstrated by  Nyberg \textcolor{black}{and Knudsen} \cite{Nyberg:Knudsen:LNCS:92}.  APN functions can be related to  a host of  problems in coding theory, sequence design, exponential sums,
projective geometry,  block designs, \textcolor{black}{and} permutation polynomials.  Therefore  APN  functions were well studied even before Nyberg defined them, and  by now  are well known objects of research. 
 In this article, we make substantial progress towards \textcolor{black}{the} resolution of the main conjecture (stated at the end of this section) on the existence of
exceptional APN functions  on finite fields.   We also contribute to \textcolor{black}{the} understanding  of the mysteries surrounding this conjecture.}
}


\textcolor{black}{
Up until now, the main tool used by most  researchers in the study of  exceptional APN functions,  has  been the method of Janwa, McGuire and Wilson in  \cite{Janwa:McGuire:Wilson:AJA:95} to prove the absolute irreducibility of multivariate polynomials.  The algorithmic approach in    \cite{Janwa:McGuire:Wilson:AJA:95}  is  based on  intersection multiplicity theory and Bezout's theorem, and computations initiated by Janwa and Wilson in  \cite{Janwa:Wilson:LNCS:93}.
}

\textcolor{black}{
Delgado and Janwa, in \cite{Delgado:Janwa:ArXiv:12,Delgado:Janwa:ArXiv:16,Delgado:Janwa:DCC:16}, based their
techniques of proving absolute irreducibility on repeated hyperplane intersections, linear
transformations, reductions, and properties of known  APN monomial functions.
These \textcolor{black}{allowed} us to overcome the very difficult  multiplicity
computations for hypersurfaces in the projective space $\mathbb{P}^3$.
As a consequence, \textcolor{black}{we established} absolute irreducibility of a class of
\textcolor{black}{Gold degree} multivariate polynomials over finite fields. \textcolor{black}{In this article, our methods for proving
absolute irreducibility are based on the decompositions of symmetric varieties as well as the transversal intersections of its components. This methods permit us to prove the absolute irreducibility of new class of Kasami-Welch degree multivariate polynomials, which implies a contribution on perhaps the hardest case of the Exceptional APN conjecture, as stated in the abstract}
}


The first step in many applications of algebraic geometry to coding theory,  cryptography,  number theory and other disciplines,  is the demonstration of absolute irreducibility of a given  variety.  Indeed, absolute irreducibility is a necessary condition in the applications of  the bounds of Weil,  Bombieri, Deligne, Lang-Weil,  Ghorpade-Lachaud, and others that estimate the number  of rational points on the corresponding varieties, or give bounds on exponential sums along curves.
Except for the well known  Eisenstein criterion (applicable in very restrictive cases), only few scattered results were known for proving absolute irreducibility (see Schmidt \cite{Schmidt:KP:04}---mostly applicable to Kummer and Artin-Schreier type of  extensions),
before the major breakthrough in  \cite{Janwa:McGuire:Wilson:AJA:95}.

 \textcolor{black}{
Therefore, our  techniques \textcolor{black}{(in this paper and in \cite{Delgado:Janwa:ArXiv:12,Delgado:Janwa:ArXiv:16,Delgado:Janwa:DCC:16, Delgado:Janwa:AMC:16})} and results are of independent interest.
In essence, our techniques are precursor to what would be multiplicity analysis in higher dimensions. These techniques could be used to prove absolute irreducibility of other multivariate polynomials
(for  possible applications, see Lidl and Niederreiter \cite{Lidl:Niederreiter:C:97}).}

\textcolor{black}{An important fact that we should remark is that our results are far better than other previous results on Gold and Kasami-Welch degree multivariate polynomials. The results of Aubry, McGuire, Rodier, Ferard and Oyono, in \cite{Aubry:McGuire:Rodier:CM:10,Rodier:Hal:11,Ferard:Oyono:CM:12}, provides absolutely irreducible families of polynomials with a big gap between the two higher degree terms, ruling out a considerable number of members of the families. Delgado and Janwa in \cite{Delgado:Janwa:DCC:16} and in this article (see Theorems \ref{thm:3mod4:Delgado-Janwa:JA:16} and    \ref{thm:5mod8:Delgado-Janwa:JA:16}) overcame this obstacle and establish absolute irreducibility for almost all Gold and Kasami-Welch degree polynomials of the form $x^n+h(x)$ where degree of $h$ is an odd number. Thus, our results contribute significantly to the proof of the conjecture.
}

\textcolor{black}{Some of the generalized Fermat  hypersurfaces are quite interesting. The  monomial hypersurfaces correspond to cyclic codes. For example, the   monomial $x^7$ leads to the Klein Quartic, whose zeta function has been computed, and applied to determine the number of codewords of weight four in the corresponding cyclic codes in  \cite{Janwa:Wilson:IEEE:16}. Similarly, the other   hypersurfaces can be used to analyze weight distribution of the corresponding  codes.   Some of the absolutely irreducible multivariate polynomials could also  be used  for the construction of algebraic geometric codes from the corresponding curves and  surfaces.
}

\begin{defn}\label{def1}\cite{Nyberg:Knudsen:LNCS:92}
\textcolor{black}{Let} $L=\mathbb{F}_q$, with $q=p^n$ for
positive \textcolor{black}{integer} $n$. A function \\  $f:L\rightarrow L$ is said to be
\textbf{almost perfect nonlinear }(APN) on $L$ if for all $a,b \in
L$, $a \neq 0$, the following equation
\begin{equation}\label{eqn:intro-1}
f(x+a)-f(x)=b
\end{equation}
has at most 2 solutions.
\end{defn}

Equivalently, for $p=2$, $f$ \textcolor{black} is APN if \textcolor{black}{the cardinality of}
the set $\{f(x+a)\textcolor{black}{-}f(x):x \in L \}$ \textcolor{black}{is} at least
$2^{n-1}$ for each $a\in L^{\ast}$. The best
known examples of APN functions are the Gold
\textcolor{black}{function} $f(x)=x^{2^k+1}$\textcolor{black}{,}  and the
Kasami-Welch
function \textcolor{black} {$f(x)=x^{2^{2k}-2^k+1}$;} they  are APN on any field
$\mathbb{F}_{2^n}$\textcolor{black}{} when $k$
\textcolor{black}{and} $n$ are relatively
\textcolor{black}{prime}. The \textcolor{black}{Welch} function $f(x)=x^{2^r+3}$
\textcolor{black}{is also APN on $\mathbb{F}_{2^n}$ when $n=2r+1$ (see \cite{Janwa:Wilson:LNCS:93} and \cite{Janwa:McGuire:Wilson:AJA:95}) .}

The APN property is invariant under  transformations of
functions.

A function $f:L \rightarrow L$    is linear if and only if $f$ is a
linearized polynomial over $L$, that is,
$$\textcolor{black}{f(x)=}\sum_{i=0}^{n-1}c_ix^{\textcolor{black}{p}^i}, \,\,\,\,\,\, c_i\in L.$$
The sum of a linear function and a constant is called an affine
function.

Two functions $f$ and $g$  are called {\it extended affine equivalent}
\textcolor{black}{(EA-equivalent)},
 if
$f=A_1\circ g \circ A_2+A$\textcolor{black}{,} where $A_1$ \textcolor{black}{and} $A_2$ are \textcolor{black}{linear maps
and $A$ is a constant function}.
They are called {CCZ-equivalence},
if the graph of $f$ can be obtained
from the graph of $g$ by an affine permutation. \textcolor{black}{EA-equivalence} is a
particular case of \textcolor{black}{CCZ-equivalence}; two \textcolor{black}{CCZ-equivalent} functions
preserve the APN property (for more details see \cite{Carlet:DCC:98}). In
general,  \textcolor{black}{CCZ-equivalence} is very difficult to establish.

Until 2006, the
list of known affine inequivalent APN functions on $L= \mathbb{F}_{2^n}$ was
rather short; the list consisted only of monomial functions of the
form  $f(x)=x^t$\textcolor{black}{,} for  positive \textcolor{black}{integer} $t$. In February 2006,
\textcolor{black}{Y. Edel}, \textcolor{black}{G. Kyureghyan} and \textcolor{black}{A. Pott} \cite{Edel:Kyureghyan:Poot:IEEE:06}  established (by an exhaustive search) the  first
example of an APN function not equivalent to any of
the known monomial APN  functions. Their  example is
$$x^3+ux^{36} \in \mathbb{F}_{2^{10}}[x],$$
where $u\in w \mathbb{F}_{2^5}^* \cup w^2\mathbb{F}_{2^5}^*$ and $w$ has order 3. It  is APN on $\mathbb{F}_{2^{10}}$.
Since \textcolor{black}{then} several APN polynomials have shown to be APN and not CCZ-equivalent to known power functions (by Felke, Leander, Bracken,  Budaghyan, Byrne, Markin, McGuire, Dillon and others).
This example, and other results 
opened up the possibility that there are perhaps  other sequences of APN function than the monomial APN functions  that have Gold and Kasami exponents.

\begin{defn}\label{def2}
Let $L=\mathbb{F}_q$, \textcolor{black}{with} $q=p^n$ for  positive integer
$n$. A function \\ $f:L\rightarrow L$ is called \textbf{exceptional
APN} if $f$ is APN on $L$ and also on
infinitely many extensions of $L$.
\end{defn}

From now on, we will assume $p=2$.   The main conjecture on APN functions,
formulated by Aubry, McGuire and Rodier  \textcolor{black}{\cite{Aubry:McGuire:Rodier:CM:10}} is   the following:

\textbf{CONJECTURE:} Up to equivalence, the Gold and Kasami-Welch
functions are the only exceptional APN functions.

Rodier provided a characterization for APN functions using varieties (see \cite{Rodier:CM:09}).

Let $L=\mathbb{F}_q$, with $q=2^n$. A function $f:L\rightarrow L$
is APN if and only if the affine variety $X$ with equation
$$f(x)+f(y)+f(z)+f(x+y+z)=0$$
has all its rational points contained in the surface
$(x+y)(x+z)(y+z)=0.$

Using this characterization, Rodier provided the following criteria about
exceptional APN functions.
(The bound results of Lang-Weil and
Ghorpade-Lachaud about rational points on a surface are crucial for the
proof of the next Theorem)

\begin{thm} Let $f:L\rightarrow L$, $L=\mathbb{F}_{2^n}$, a polynomial
function of degree $d$. Suppose that the variety $X$ of affine
equation
$$\frac{f(x)+f(y)+f(z)+f(x+y+z)}{(x+y)(x+z)(y+z)}=0$$
is absolutely irreducible (or has an absolutely irreducible
component over $L$), then $f$ is not an exceptional APN function.
\end{thm}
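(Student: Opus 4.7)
The plan is to show that if $X$ has an absolutely irreducible component defined over $L$, then $f$ fails to be APN on all sufficiently large extensions $\mathbb{F}_{2^{nm}}$, which by definition precludes $f$ from being exceptional APN. The tool is the Lang-Weil / Ghorpade-Lachaud bound for rational points on absolutely irreducible varieties over finite fields, combined with Rodier's criterion that $f$ is APN on $\mathbb{F}_{q'}$ exactly when every rational zero of $F := f(x)+f(y)+f(z)+f(x+y+z)$ lies on $G := (x+y)(x+z)(y+z)=0$.

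First, I would record the divisibility $G \mid F$ in $\overline{\mathbb{F}}_2[x,y,z]$: in characteristic $2$, setting $y=x$ forces $F(x,x,z)=2f(x)+f(z)+f(z)=0$, and symmetrically for $x=z$ and $y=z$; hence $H := F/G$ is a polynomial of degree $d-3$, and $X = V(H)$. Let $Y$ be the postulated absolutely irreducible component of $X$ defined over $L$; it is a surface in $\mathbb{A}^3$, and since $H$ and $G$ share no common factor (no plane $x=y,\,x=z,\,y=z$ occurs as a component of $V(H)$, as would require $(x+y)^2 \mid F$, etc.), the intersection $Y\cap V(G)$ is a one-dimensional closed subset of $Y$.

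Next, for each extension $\mathbb{F}_{q'}$ with $q' = 2^{nm}$, I would apply the Ghorpade-Lachaud estimate to the absolutely irreducible surface $Y$ over $L\subseteq\mathbb{F}_{q'}$, yielding
\[
\bigl| |Y(\mathbb{F}_{q'})| - q'^2 \bigr| \;\leq\; C_1(d)\, q'^{3/2},
\]
where $C_1(d)$ depends only on $\deg H \leq d-3$. Since $Y\cap V(G)$ is a curve in $\mathbb{A}^3$ of degree bounded in terms of $d$, a trivial Schwartz--Zippel-type estimate (or Lang-Weil for curves) gives
\[
\bigl| Y(\mathbb{F}_{q'}) \cap V(G) \bigr| \;\leq\; C_2(d)\, q'.
\]
Subtracting, the number of rational points of $Y$ lying \emph{outside} $V(G)$ satisfies
\[
\bigl|\{P\in Y(\mathbb{F}_{q'}) : G(P)\neq 0\}\bigr| \;\geq\; q'^2 - C_1(d)\,q'^{3/2} - C_2(d)\,q',
\]
which is strictly positive once $q'$ exceeds a bound depending only on $d$. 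Each such point is a rational zero of $F$ at which $G\neq 0$, so by Rodier's characterization $f$ is not APN on $\mathbb{F}_{q'}$.

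Since this holds for all but finitely many extensions $\mathbb{F}_{q'}$ of $L$, $f$ is not exceptional APN. The main obstacle in executing the plan is ensuring that the absolutely irreducible component $Y$ is genuinely not a plane contained in $V(G)$; this is where the assumption that $Y$ is a component of $V(H)$ (rather than of $V(F)$) is essential, since the trivial planes have already been divided out. Provided this point is settled, the rest of the proof is a direct application of known finite-field point-counting estimates, and no delicate intersection-multiplicity analysis is required.
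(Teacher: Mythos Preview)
Your proposal is essentially the argument the paper has in mind: the paper does not actually prove this theorem but attributes it to Rodier, remarking only that ``the bound results of Lang--Weil and Ghorpade--Lachaud about rational points on a surface are crucial for the proof.'' Your sketch fleshes out precisely that approach---apply a Lang--Weil/Ghorpade--Lachaud lower bound to the rational points of the absolutely irreducible component $Y$, subtract off the at-most-$O(q')$ points lying on the three diagonal planes, and invoke Rodier's characterization of APN---so there is nothing to compare beyond noting that your write-up and the paper's one-line citation point to the same proof.
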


As can be seen in this theorem, proving absolute irreducibility, or the existence
of an absolutely irreducible factor, does guarantee not to be exceptional APN. In section 3 we study $X$ related
to the Kasami-Welch number (the Kasami-Welch variety $X$), its decomposition into absolutely irreducible components,
as well as some properties of these components. One of our main results is that these components
intersect transversally at a particular point. This result provides two new infinite families of absolutely
irreducible polynomials in section 4 and, in section 5, as a direct application, its contribution to the
conjecture of exceptional functions.

For the rest of the article, let us denote
\begin{equation}
\phi(x,y,z)=\frac{f(x)+f(y)+f(z)+f(x+y+z)}{(x+y)(x+z)(y+z)},
\end{equation}

\begin{equation}\label{eqn:phij:Delgado-Janwa:JA:16}
 \phi_j(x,y,z)=\frac{x^j+y^j+z^j+(x+y+z)^j}{(x+y)(x+z)(y+z)}
\end{equation}

Thus, if $f(x)=x^d+a_{d-1}x^{d-1}+a_{d-2}x^{d-2}+...+a_0$, then

\begin{equation}\label{eqn:expansion-phij:Delgado-Janwa:JA:16}
\phi(x,y,z)=\sum_{j=3}^d a_j\phi_j(x,y,z)
\end{equation}

Let us call $\phi(x,y)=\phi(x,y,1)$, $\phi_j(x,y)=\phi_j(x,y,1)$, its affine parts.

\section{The Kasami-Welch case of the conjecture of APN functions}
The well known conjecture \cite{Aubry:McGuire:Rodier:CM:10} about exceptional APN functions is:

\textbf{CONJECTURE:} Up to equivalence, the Gold and Kasami-Welch
monomial functions, $f(x)=x^{2^k+1}$ and $f(x)=x^{2^{2k}-2^k+1}$ respectively, are the only exceptional APN functions.

The conjecture was settled for monomials by Hernando and McGuire in \cite{Hernando:McGuire:JA:11}. Important results supporting this conjecture have been obtained in the last years
for polynomials. In \cite{Aubry:McGuire:Rodier:CM:10}, Aubry, McGuire and Rodier
settled the conjecture for odd degree polynomials (excluding the Gold and Kasami-Welch degree). Very recently, many important
results has been also obtained for Gold degree polynomials by Delgado and Janwa in \cite{Delgado:Janwa:ArXiv:12,Delgado:Janwa:ArXiv:16,Delgado:Janwa:DCC:16}. The Kasami-Welch case has been hardly studied up to now.
Ferard, Oyono and Rodier in \cite{Ferard:Oyono:CM:12} obtained the following results (the only two established for this case).

\begin{thm}
Suppose that $f(x)=x^{2^{2k}-2^k+1}+g(x) \in L[x]$
where $\deg(g) \leq 2^{2k-1}-2^{k-1}+1$. Let
$g(x)=\sum_{j=0}^{2^{2k-1}-2^{k-1}+1}a_jx^j$. Suppose moreover that
there exist a nonzero coefficient $a_j$ of $g(x)$ such that
$\phi_j(x,y,z)$ is absolutely irreducible. Then $f$ is not exceptional APN.
\end{thm}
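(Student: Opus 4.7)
My plan is to invoke Rodier's criterion from earlier in the excerpt and reduce the goal to exhibiting an absolutely irreducible component of $\phi(x,y,z)$ over $L$. I would argue by contradiction: suppose $\phi$ admits no such component, and factor $\phi = A\cdot B$ nontrivially in $\overline{L}[x,y,z]$. Expanding $\phi$ into homogeneous parts, the leading form is $\phi_d$ (of degree $d-3$), and the perturbation $\phi - \phi_d = \sum_{j=3}^{J} a_j\phi_j$ has degree $J-3 \leq (d-5)/2$, where $J=\max\{j:a_j\neq 0\}$. Consequently every intermediate homogeneous component $\phi^{(i)}$ vanishes for $(d-5)/2 < i < d-3$. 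By the main results promised in Section~3, $\phi_d$ decomposes as $\phi_d = P\cdot Q$ into two absolutely irreducible components meeting transversally at a singular point.

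Matching leading forms in $\phi=AB$ forces, up to relabeling, $\mathrm{LF}(A)=P$ and $\mathrm{LF}(B)=Q$, so $A=P+A'$ and $B=Q+B'$ with $\deg A'<p:=\deg P$ and $\deg B'<q:=\deg Q$. The central step is an iterative descent: for $m=1,2,\ldots$ through the degree gap, the vanishing $\phi^{(d-3-m)}=0$ yields an identity $P\cdot B'^{(q-m)} + Q\cdot A'^{(p-m)} + (\text{cross terms vanished at earlier steps}) = 0$; combining the coprimality and absolute irreducibility of $P$ and $Q$ with the strict bounds $\deg A'^{(p-m)}<p$ and $\deg B'^{(q-m)}<q$ forces $A'^{(p-m)}=B'^{(q-m)}=0$ by induction on $m$. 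This descent zeroes out all homogeneous parts of $A'$ and $B'$ throughout the gap.

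Upon reaching degree $J-3$ (and descending further), $\phi-\phi_d$ becomes nonzero, and one obtains a relation of the form $\sum_{j} a_j\phi_j = P\cdot B'^{(\cdot)} + Q\cdot A'^{(\cdot)} + (\text{surviving cross-products})$. The hypothesis that $\phi_j$ is absolutely irreducible for some index $j$ with $a_j\neq 0$ is now decisive: fixing such a $j$ and descending to its corresponding degree, the induced equation forces $\phi_j$ to coincide with (or divide) a combination of $P$- and $Q$-multiples of polynomials of degree strictly less than $p$ and $q$. Since $\deg\phi_j = j-3 \leq (d-5)/2 < \min(p,q)$, a divisibility-and-degree argument (using absolute irreducibility to rule out $\phi_j \mid P$ or $\phi_j \mid Q$, and the degree gap to rule out $P \mid \phi_j$ or $Q \mid \phi_j$) produces a contradiction, and therefore $\phi$ must have an absolutely irreducible component over $L$.

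The main obstacle I anticipate is the bookkeeping once the descent crosses below the degree gap: the cross-products $A'^{(a)}B'^{(b)}$ are no longer guaranteed to vanish and must be tracked alongside the $P$- and $Q$-multiples at each level. Degenerate cases, for instance when one of $A=P$ or $B=Q$ reduces to a pure homogeneous polynomial (i.e.\ the iterative descent zeroes out all of $A'$ or all of $B'$), will require direct handling: one must show that $Q$ (or $P$) cannot divide $\phi-\phi_d$, using the absolute irreducibility and low degree of the hypothesized $\phi_j$ to prevent such a divisibility.
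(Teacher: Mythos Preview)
First, note that the paper does not supply its own proof of this statement: it is quoted from F\'erard--Oyono--Rodier as prior work, so there is no in-paper argument to compare against. I can therefore only assess the internal soundness of your proposal.

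Your outline rests on a structural misreading of Section~3. You assert that ``$\phi_d$ decomposes as $\phi_d = P\cdot Q$ into two absolutely irreducible components meeting transversally at a singular point.'' That is not what is proved there. For $d=2^{2k}-2^k+1$ the Janwa--Wilson factorization is
\[
\phi_d(x,y)=\prod_{\alpha\in\mathbb{F}_{2^k}\setminus\mathbb{F}_2}P_\alpha(x,y),
\]
a product of $2^k-2$ distinct absolutely irreducible factors, each of degree $2^k+1$. In a hypothetical splitting $\phi=AB$ the leading forms $P=\mathrm{LF}(A)$ and $Q=\mathrm{LF}(B)$ are products of complementary subsets of the $P_\alpha$: they are coprime, but generally \emph{not} absolutely irreducible, and their degrees $p,q$ can be any multiples of $2^k+1$ summing to $d-3$. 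The transversal-intersection theorem of Section~3 is about these $2^k-2$ curves at the point $(1,1)$; you invoke it but never actually use it in your argument.

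This misreading breaks your endgame. Your contradiction relies on $\deg\phi_j\le(d-5)/2<\min(p,q)$, but $\min(p,q)$ can be as small as $2^k+1$, which for $k\ge 2$ is far below $(d-5)/2=2^{2k-1}-2^{k-1}-2$. Hence neither ``$P\mid\phi_j$ fails by degree'' nor ``$\phi_j\mid P$ fails because $P$ is irreducible'' is available. In fact, since the degree gap has length at least $(d-3)/2\ge\min(p,q)$, your descent (which is correct as far as it goes, using only coprimality of $P$ and $Q$) does not stop partway: it zeroes out \emph{all} of the smaller factor, so what you flag as a ``degenerate case'' ($A=P$ homogeneous) is the only case. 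One is then left with $P\mid\sum_j a_j\phi_j$, hence $P\mid\phi_j$ for the distinguished $j$, and must still rule out $\phi_j$ being an associate of a single $P_\alpha$; this last step requires an additional argument (for instance, that $P_\alpha$ is not defined over $\mathbb{F}_2$ because $P_\alpha(x,0)=(x+\alpha)^{2^k+1}$ with $\alpha\notin\mathbb{F}_2$, whereas $\phi_j$ is), none of which your proposal supplies.
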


For the case when the degree of $g$ is $2^{2k-1}-2^{k-1}+2$, they obtained:

\begin{thm}
Suppose that $f(x)=x^{2^{2k}-2^k+1}+g(x) \in L[x]$
where $\deg(g) \leq 2^{2k-1}-2^{k-1}+2$. Let $k \geq 3$ be odd and relatively prime to $n$.
If $g(x)$ does not have the form $ax^{2^{2k-1}-2^{k-1}+2}+a^2x^3$ then $\phi$ is absolutely irreducible, while if $g(x)$
does have this form then either $\phi$ is irreducible or $\phi$ splits into two absolutely irreducible factors which are
both defined over $L$
\end{thm}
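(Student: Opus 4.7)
The plan is to exploit the decomposition of the leading form $\phi_d$, with $d=2^{2k}-2^k+1$, into absolutely irreducible components meeting transversally at a singular point, which is the main structural theorem of Section~3. Since $\deg(g)\le 2^{2k-1}-2^{k-1}+2$, after division by $(x+y)(x+z)(y+z)$ the perturbation $\sum_j a_j\phi_j$ has total degree at most $2^{2k-1}-2^{k-1}-1$, strictly smaller than $\deg\phi_d=2^{2k}-2^k-2$. Hence $\phi_d$ is genuinely the leading form of $\phi$, and its geometry controls that of $\phi$.

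Suppose $\phi=FG$ is a nontrivial factorization over $\overline{L}$. Taking top-degree forms gives $\phi_d=\overline{F}\cdot\overline{G}$, so by unique factorization $\overline{F}$ and $\overline{G}$ are complementary products of the absolutely irreducible components $P_1,\dots,P_r$ of $\phi_d$. Writing $F=P_I+\widetilde{F}$ and $G=P_J+\widetilde{G}$ with $P_IP_J=\phi_d$ and $\widetilde{F},\widetilde{G}$ of strictly lower degree, I would expand the product and match graded pieces against $\sum_j a_j\phi_j$, obtaining the identity $P_I\widetilde{G}+P_J\widetilde{F}+\widetilde{F}\widetilde{G}=\sum_j a_j\phi_j$ together with tight degree restrictions on the correction terms.

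Next I would localize at the distinguished singular point $P_0$. Transversality allows one to place the components $P_i$ in local normal form as distinct hyperplane germs, and the restriction of $\widetilde{F}$ and $\widetilde{G}$ to each branch is then governed by only a few local parameters. Matching these branch restrictions against the narrow degree window permitted for $g$, and using the characteristic-two relation $\phi_{2j}=(x+y)(x+z)(y+z)\,\phi_j^2$ which pairs even-index and odd-index $\phi_j$, should force all intermediate $a_j$ to vanish and leave exactly one Frobenius-square compatibility condition between the top coefficient $a_m$ at $m=2^{2k-1}-2^{k-1}+2$ and the $x^3$ coefficient, namely $a_3=a_m^2$. This proves the first assertion by contrapositive: outside this exceptional shape, no factorization of $\phi$ can exist.

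For the exceptional case $g(x)=ax^m+a^2x^3$, I would exhibit the factorization explicitly by lifting each of the two complementary groupings of the $P_i$ under the Frobenius twist determined by $a$, and verifying by direct multiplication that the two lifts recover $\phi$. The hypothesis $\gcd(k,n)=1$ with $k\ge 3$ odd enters to guarantee that each grouping is individually preserved by the $n$-th power Frobenius and that the Galois orbits on the $P_i$ are the expected non-degenerate ones, so that both factors are defined over $L$; whether $\phi$ actually splits or remains absolutely irreducible then hinges on whether a valid $L$-rational grouping exists. The main obstacle will be the third step: making the transversality at $P_0$ quantitative enough to simultaneously pin down the support of $g$ and the Frobenius-square relation between its two surviving coefficients. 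This requires a delicate local intersection-multiplicity calculation in the spirit of Janwa--McGuire--Wilson, adapted to the specific branch configuration at $P_0$ produced by the structural theorem.
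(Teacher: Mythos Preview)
The first thing to note is that the paper does \emph{not} prove this theorem at all: it is quoted in Section~2 as one of two results of F\'erard, Oyono and Rodier \cite{Ferard:Oyono:CM:12}, cited as prior work to set the context for the new Kasami--Welch results of Sections~3--5. There is therefore no ``paper's own proof'' to compare against, and in particular the original argument of F\'erard--Oyono--Rodier cannot rest on the transversality theorem of Section~3, which is the main new contribution of the present paper and postdates their work.

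As for your proposal itself, the overall architecture (leading-form argument, graded matching, localization at the singular point) is reasonable and is indeed close in spirit to how such results are typically proved, but what you have written is a plan rather than a proof, and the decisive step is missing. The heart of the matter is your third paragraph: you assert that the local analysis at $p$, together with the degree window and the squaring relation $\phi_{2j}=(x+y)(x+z)(y+z)\phi_j^{2}$, ``should force all intermediate $a_j$ to vanish'' and produce exactly the constraint $a_3=a_m^{2}$. This is precisely the content of the theorem, and you have not carried out any of the computation that would establish it. Transversality of the $P_\alpha$ at $p$ gives you that each $P_\alpha$ contributes a distinct tangent direction, but turning that into a rigidity statement strong enough to kill every coefficient of $g$ except the two named ones, and to pin down the Frobenius-square relation between them, requires a genuine calculation that you have not supplied or even outlined concretely. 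Likewise, in the exceptional case you promise to ``exhibit the factorization explicitly'' without doing so, and your explanation of where the hypotheses $k\ge 3$ odd and $\gcd(k,n)=1$ enter is speculative. Until those computations are actually performed, the proposal does not constitute a proof.
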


Unlike the Gold case, for the Kasami case, it  has been  difficult to push up on the degree of $g(x)$ to obtain new results.
Part of this difficulty is due to the scarce knowledge on the decomposition of the Kasami-Welch variety.

\section{Transversal intersection on the Kasami-Welch variety}
Let $X$ the Kasami-Welch variety as defined in section 1.
The following fact, due to Janwa and Wilson \cite{Janwa:Wilson:LNCS:93}, is known.

If $t=2^{2k}-2^k+1$, then
\begin{equation}\label{eqn:kasami-products :Delgado-Janwa:JA:16}
\phi_t(x,y)=\prod_{\alpha \in \mathbb{F}_{2^k}-\mathbb{F}_2}P_{\alpha}(x,y)
\end{equation}
where $P_{\alpha}(x,y)$ is absolutely irreducible of degree
$2^k+1$ over $\mathbb{F}_{2^k}$. Furthermore, $P_{\alpha}$ satisfies $P_{\alpha}(x,0)=(x+\alpha)^{2^k+1}$.

Let $p=(1,1)$, a singular point of $X$ (Janwa and Wilson in \cite{Janwa:Wilson:LNCS:93,Janwa:McGuire:Wilson:AJA:95}, and
consequently Hernando and McGuire in \cite{Hernando:McGuire:JA:11}, classified the singularities of $X$).
One of our main results is  that $p \in P_{\alpha}$, for all $\alpha \in \mathbb{F}_{2^k}-\mathbb{F}_2$.

Let
\begin{equation}
\label{eqn:kasami-productsQ:Delgado-Janwa:JA:16}\phi_t(x,y)=\prod_{\alpha \in [\alpha]}Q_{\alpha}(x,y)
\end{equation}
where $[\alpha]$ is the conjugate class of $\alpha$ under the action of the Frobenius automorphisms (counting in the product only one representative for each conjugacy
class); \textcolor{black}{i.e.,} $Q_{\alpha} \in \mathbb{F}_2[x,y]$ is the product of
conjugate absolutely irreducible factors $P_{\alpha}$. \textcolor{black}{Since $Q_{\alpha}(p)=0$ implies that $Q_{\beta}(p)=0$ for all $\beta \in [\alpha]$},
then the fact that $p \in P_{\alpha}$, for all $\alpha \in \mathbb{F}_{2^k}-\mathbb{F}_2$, is proved if we prove that
$p \in Q_{\alpha}$, for all $\alpha \in \mathbb{F}_{2^k}-\mathbb{F}_2$.

Grouping equation  (\ref{eqn:kasami-productsQ:Delgado-Janwa:JA:16} ) in symmetric factors:
\begin{equation}\label{eqn:kasami-symmetric-products :Delgado-Janwa:JA:16}
\phi_t(x,y)=\prod_{\alpha \in [\alpha]}Q'_{\alpha}(x,y)
\end{equation}
where  $Q'_{\alpha}=Q_{\alpha}$ \textcolor{black}{ if $Q_{\alpha}$ is symmetric} and $Q'_{\alpha}=Q_{\alpha}\overline{Q_{\alpha}}$
if $Q_{\alpha}$ is not symmetric (the overline means its symmetric pair). \textcolor{black}{Thus it is enough to prove that $p \in Q'_{\alpha}(x,y)$},
\textcolor{black}{since $Q_{\alpha}(p)=0$ implies $\overline{Q_{\alpha}}(p)=0$. }
Because the aim is to prove that  $p \in P_{\alpha}$, for all $\alpha \in \mathbb{F}_{2^k}-\mathbb{F}_2$,
that is $p \in Q_{\alpha}$ for all $\alpha \in \mathbb{F}_{2^k}-\mathbb{F}_2$ (as $Q_{\alpha}$ is a relabeling of the set $P_{\alpha}$.)

\begin{lem}
\label{lem:oddED:Delgado-Janwa:JA:16}
$Q'_{\alpha}(x,y)$ has an odd number of terms of the form $x^my^m$ \textcolor{black}{for $m\geq 1$. We will refer them as "equal degree" (ED) terms.}
\end{lem}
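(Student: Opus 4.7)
The plan is to prove Lemma 1 in three moves. The symmetry of $Q'_\alpha \in \mathbb{F}_2[x,y]$ reduces the ED-count question to two evaluations of $Q'_\alpha$; the constant-term evaluation is a norm computation over a Frobenius orbit; and the value at $(1,1)$ follows from a transversal-intersection analysis at the singular point $p=(1,1)$.

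Writing $Q'_\alpha(x,y) = \sum c_{ij}x^iy^j$ with $c_{ij}=c_{ji}$ (coefficients in $\mathbb{F}_2$), the off-diagonal coefficients pair up and cancel modulo $2$, so $Q'_\alpha(1,1) \equiv \sum_i c_{ii} \pmod 2$. Hence the total number of ED terms $x^m y^m$ (including the constant) has parity $Q'_\alpha(1,1)$, and the ED count for $m\ge 1$ has parity $Q'_\alpha(1,1)+Q'_\alpha(0,0)$. So the lemma is equivalent to $Q'_\alpha(1,1)+Q'_\alpha(0,0)=1$ in $\mathbb{F}_2$. Now write $Q'_\alpha = \prod_\beta P_\beta$, where $\beta$ ranges over the relevant Frobenius orbit (together with the symmetric-partner orbit when applicable). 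Then $P_\beta(x,0)=(x+\beta)^{2^k+1}$ gives $P_\beta(0,0)=\beta^{2^k+1}$, so $Q'_\alpha(0,0)=\bigl(\prod_\beta\beta\bigr)^{2^k+1}$. For a Frobenius orbit of size $d$, $\prod_\beta\beta = \alpha^{1+2+\cdots+2^{d-1}}=\alpha^{2^d-1}=1$ since $\alpha\in\mathbb{F}_{2^d}^*$; pairing with a symmetric partner only squares this, which remains $1$. Therefore $Q'_\alpha(0,0)=1$.

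The main obstacle is showing $Q'_\alpha(1,1)=0$. My plan is to verify that every component $P_\beta$ passes through $(1,1)$, forcing $Q'_\alpha(1,1)=\prod_\beta P_\beta(1,1)=0$. To establish this, substitute $x=1+u$, $y=1+v$ into $\phi_t(x,y)$ and expand in local coordinates $(u,v)$. Using the binary expansion $t=1+2^k+2^{k+1}+\cdots+2^{2k-1}$ together with Lucas's theorem in characteristic $2$---so that $(1+a)^t=(1+a)\prod_{j=k}^{2k-1}(1+a^{2^j})$---most low-order terms of the numerator cancel, and after dividing by $(x+y)(x+1)(y+1)=uv(u+v)$ the lowest-degree homogeneous part of $\phi_t(1+u,1+v)$ is
\[
\frac{u^{2^k-1}+v^{2^k-1}}{u+v} \;=\; \prod_{\gamma \in \mathbb{F}_{2^k}-\mathbb{F}_2}(u+\gamma v),
\]
a polynomial of degree $2^k-2$ that splits into $2^k-2$ distinct linear factors. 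Since $\phi_t=\prod_\beta P_\beta$ has exactly $2^k-2$ absolutely irreducible factors, additivity of multiplicity at $(1,1)$ together with the Galois-equivariant matching of indices $\beta \leftrightarrow \gamma$ forces each $P_\beta$ to pass through $(1,1)$ transversally---with multiplicity one and a distinct tangent direction. Thus $P_\beta(1,1)=0$ for every $\beta$, whence $Q'_\alpha(1,1)=0$. Combining with the previous paragraph gives $Q'_\alpha(1,1)+Q'_\alpha(0,0)=0+1=1$ in $\mathbb{F}_2$, so the number of ED terms $x^m y^m$ with $m\ge 1$ in $Q'_\alpha(x,y)$ is odd.
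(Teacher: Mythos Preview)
Your reduction of the lemma to the identity $Q'_\alpha(1,1)+Q'_\alpha(0,0)=1$ in $\mathbb{F}_2$ is clean, and your computation of $Q'_\alpha(0,0)=1$ via the Frobenius--orbit norm is correct (the paper gets the same conclusion more crudely, from $\phi_t(0,0)=1$ and factorization over $\mathbb{F}_2$). The difficulty is entirely in the step $Q'_\alpha(1,1)=0$, and there your argument has a genuine gap.

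You claim that because the tangent cone of $\phi_t$ at $p=(1,1)$ has degree $2^k-2$ and splits into $2^k-2$ distinct lines $(u+\gamma v)$, and because $\phi_t=\prod_\beta P_\beta$ also has $2^k-2$ factors, a ``Galois-equivariant matching $\beta\leftrightarrow\gamma$'' forces $m_p(P_\beta)=1$ for every $\beta$. But additivity only gives $\sum_\beta m_p(P_\beta)=2^k-2$, and Galois-equivariance only forces $m_p(P_\beta)$ to be constant along each Frobenius orbit of $\beta$'s. It does \emph{not} exclude the possibility that one orbit has $m_p=0$ (its $P_\beta$'s miss $p$ entirely) while another orbit carries multiplicity $\ge 2$ to compensate. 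The fact that the index sets $\{\beta\}$ and $\{\gamma\}$ happen to coincide as abstract Galois sets does not by itself pin down which tangent lines belong to which $P_\beta$; you would need an explicit computation of the leading form of $P_\beta(1+u,1+v)$, and no such formula for $P_\beta$ is available.

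This is not a minor omission: in the paper's logic, the statement ``$p\in P_\beta$ for every $\beta$'' is exactly Theorem~\ref{thm:transversal:Delgado-Janwa:JA:16}, and the paper proves it \emph{using} Lemma~\ref{lem:oddED:Delgado-Janwa:JA:16}. Your argument runs the implication backwards, so within the paper it is circular, and your attempt to break the circle by proving $p\in P_\beta$ independently is the unjustified step above. The paper's own proof avoids this by never touching the individual $P_\beta$: it first shows $\phi_t$ has an odd number of ED terms (from $\phi_t(0,0)=1$ and $\phi_t(1,1)=0$, exactly your parity trick applied to $\phi_t$ rather than to $Q'_\alpha$), and then proves a purely combinatorial propagation lemma---if a product of symmetric $\mathbb{F}_2$-polynomials with constant term $1$ has an odd ED count, so does each factor---to push the odd count down to every $Q'_\alpha$.
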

\begin{proof}
\textcolor{black}{Since $\phi_t(0,0)=1$ and $\phi_t(x,y)$ is symmetric, by manipulation in (2),}
\begin{equation}
\textcolor{black}{\phi_t(x,y)=F_{2^{2k}-2^k-2}(x,y)+\cdots+ F_i+\cdots+F_1(x,y)+1, }
\end{equation}

\noindent where $F_i(x,y)$ is 0 or a symmetric
homogeneous \textcolor{black}{polynomial} of degree $i$.

By the symmetry of $\phi_t$, the number of non-ED terms of the form $x^m, y^n$ or $x^ny^m$ \textcolor{black}{(where $m,n > 0)$, }
will occur in pairs in $\phi_t(x,y)$. Since $p \in X$, the multiplicity of $p$, $m_p$, is a positive number. \textcolor{black}{Then,} \textcolor{black}{ $\phi_t(x+1,y+1)= G_{2^{2k}-2^k-2}(x,y)+\cdots+ G_i(x,y)+\cdots+G_{m_p}(x,y)$, }
\textcolor{black}{where $G_i(x,y)$ is 0 or a symmetric  homogeneous polynomial of degree $i$.}

\textcolor{black}{{\bf CLAIM:}    $\phi_t(x,y)$ should have an odd number of ED terms.}

  \textcolor{black}{To prove this, we first observe that since each term \textcolor{black}{ (ED or non-ED)} of $\phi_t(x,y)$ produce a constant
 term 1 in the expansion of}   $\phi_t(x+1,y+1)$,
  \textcolor{black}{because for $m,n > 0 $, $(x+1)^m, (y+1)^m$, $(x+1)^m (y+1)^n$, and $(x+1)^n (y+1)^m$,  all equal 1, when $(x,y)=(0,0)$. Now in the
 term by term expansion of $\phi_t(x+1,y+1)$, the non-constant non-ED terms , since they occur in pairs,     contribute $0 \pmod 2$, to the constant term. Therefore, the ED terms would have to contribute
 $1\pmod 2$ in this expansion of $\phi(x+1,y+1)$ for the constant term to vanish. Therefore $\phi_t(x,y)$ must have an odd number of duplicate ED terms. }

 \textcolor{black}{{\bf Proof of the Lemma}: Now,  since $\phi_t(x,y)$ has an odd number of ED-terms, and since it  is  the product of $Q'_{\alpha}$, we show that these facts  force each $Q'_{\alpha}$ to have an odd number of ED-terms. Suppose that $\phi_t(x,y)$  were a product of
two distinct  symmetric polynomials $A_{\alpha}$ and  $A_{\beta}$, with the number of ED-terms $t_\alpha$ and $t_\beta$, then  in the product  $A_{\alpha} (x,y) A_{\beta}(x,y)$, we  would get $t_\alpha t_\beta$ ED-terms.
Since $A_{\alpha}$ and  $A_{\beta}$ are symmetric, and since each non-constant non-ED term in each $A_{\alpha}$ and  $A_{\beta}$ occur in pairs, we get a contribution of an even number of ED-terms,
 say $2 t_{\alpha\beta}$. Since , $\phi_t(0,0)=1$, we have  $A_{\alpha}(0,0)=A_{\beta}(0,0)=1$, and
 in the product, the constant terms contribute $t_\alpha+t_\beta$ number of \textcolor{black}{ED} terms. Therefore,
 the number of ED-terms in the product is   $t_\alpha t_\beta+t_\alpha+t_\beta+2*t_{\alpha\beta}$, and this number is odd if and only if each  $t_{\alpha}$ and $t_{\beta}$ is odd.
 We have shown that the property of odd number of ED-terms propagate to  its two distinct factors  \textcolor{black}{that} are symmetric and have constant term 1. Since each $Q'_{\alpha}(x,y)$ is symmetric and has constant term 1, we can group them in two distinct factors $A_{\alpha}$ and $A_{\beta}$, and we conclude by induction that each $Q'(x,y)$ has an odd number of Ed-terms.
  }

\end{proof}

Two or more curves $f_i$ are said to intersect transversally at a point $p$ if $p$ is a simple point  of each $f_i$ and
if the tangent lines to $f_i$ at $p$ are \textcolor{black}{pairwise distinct}.

The next theorem is our main result.
\begin{thm}
\label{thm:transversal:Delgado-Janwa:JA:16}
In the absolutely irreducible factorization of Equation (\ref{eqn:kasami-products :Delgado-Janwa:JA:16}), the components \textcolor{black}{$P_{\alpha}(x,y)$}, $\alpha \in \mathbb{F}_{2^k}-\mathbb{F}_2$, intercept transversally at $p=(1,1)$.
\end{thm}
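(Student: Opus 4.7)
The plan is to verify the three requirements for transversal intersection at $p=(1,1)$: (i) each $P_{\alpha}$ passes through $p$; (ii) $p$ is a simple point of each $P_{\alpha}$; (iii) the tangent lines to the $P_{\alpha}$ at $p$ are pairwise distinct. The unifying idea is to translate the problem to the origin by substituting $x\mapsto x+1$, $y\mapsto y+1$ and analyze the lowest-degree homogeneous part of the shifted polynomial, which carries both the multiplicity and the full tangent cone of $\phi_t$ at $p$.

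Containment (i) is harvested directly from the preceding Lemma. Each symmetric factor $Q'_{\alpha}(x,y)$ has constant term $1$ and an odd number of equal-degree terms $x^my^m$; evaluating at $(1,1)$ yields an odd count of $1$'s plus the constant $1$, which is $0$ in characteristic $2$. The conjugacy argument used just before the Lemma then transports vanishing from $Q'_{\alpha}$ to every absolutely irreducible factor $P_{\alpha}$.

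For (ii) and (iii) I would compute the tangent cone of $\phi_t$ at $p$ directly. Starting from
\[
\phi_t(x,y,z)(x+y)(x+z)(y+z) = x^t + y^t + z^t + (x+y+z)^t,
\]
the substitution $x\to x+1$, $y\to y+1$, $z=1$ turns the denominator into $xy(x+y)$ and the numerator into
\[
N(x,y) \;=\; (x+1)^t + (y+1)^t + (x+y+1)^t + 1 .
\]
Since $t=2^{2k}-2^k+1$ has binary expansion $\underbrace{1\cdots1}_{k}\underbrace{0\cdots0}_{k-1}1$, Lucas' theorem identifies the odd binomials $\binom{t}{i}$ as those whose $i$ is supported on the bit positions $\{0,k,k+1,\ldots,2k-1\}$. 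A straightforward bookkeeping of the contributions from $(x+1)^t$, $(y+1)^t$, and $(x+y+1)^t=\sum_i\binom{t}{i}(x+y)^i$ shows that all terms of total degree below $2^k+1$ cancel and that the lowest-degree surviving part of $N$ is
\[
xy\bigl(x^{2^k-1}+y^{2^k-1}\bigr),
\]
a homogeneous polynomial of degree $2^k+1$.

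Dividing by $xy(x+y)$, the tangent cone of $\phi_t$ at $p$ is
\[
\frac{x^{2^k-1}+y^{2^k-1}}{x+y} \;=\; \prod_{\alpha\in\mathbb{F}_{2^k}-\mathbb{F}_2}(x+\alpha y),
\]
using the complete splitting $x^{2^k-1}+y^{2^k-1}=\prod_{\zeta\in\mathbb{F}_{2^k}^{\ast}}(x+\zeta y)$ and removing the $\zeta=1$ factor. In particular $m_p(\phi_t)=2^k-2$, which is exactly the number of irreducible factors $P_{\alpha}$. Since each $P_{\alpha}$ contributes at least $1$ to this multiplicity by (i), each must contribute exactly $1$, proving (ii); and the tangent cone of $\phi_t$ equals the product of the tangent lines of the $P_{\alpha}$, which we have just exhibited as $2^k-2$ pairwise distinct linear forms, proving (iii). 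Transversality follows.

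The main obstacle I expect is the combinatorics producing $N(x,y)$'s leading form. One must simultaneously (a) identify the Lucas support of $t$, (b) follow the precise cancellation across the three trinomial-like pieces through the degree strata $1$, $2^k$, and $2^k+1$, and (c) verify that no supported exponent strictly between $2^k+1$ and the next value $2^{k+1}$ leaks into the tangent cone. Once this bookkeeping is controlled, the factorization into distinct linears is immediate from the complete splitting of $x^{2^k-1}+y^{2^k-1}$ over $\mathbb{F}_{2^k}$, and the natural correspondence $\alpha\leftrightarrow (x+\alpha y)$ meshes pleasantly with the parametrization of the $P_{\alpha}$ by $\alpha\in\mathbb{F}_{2^k}-\mathbb{F}_2$.
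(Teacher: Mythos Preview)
Your proposal is correct and follows essentially the same route as the paper: use the preceding Lemma to get $p\in P_\alpha$ for every $\alpha$, then use that $m_p(\phi_t)=2^k-2$ with a tangent cone splitting into $2^k-2$ distinct linear forms, and conclude by the pigeonhole on multiplicities that each $P_\alpha$ is simple at $p$ with a distinct tangent. The only difference is cosmetic: the paper cites Janwa--McGuire--Wilson for the multiplicity and the distinct-linear-factor tangent cone, whereas you rederive these facts directly via the Lucas-theorem computation of the low-degree part of $N(x,y)$ and the splitting $x^{2^k-1}+y^{2^k-1}=\prod_{\zeta\in\mathbb{F}_{2^k}^*}(x+\zeta y)$; your bookkeeping is correct (the supported exponents below $2^{k+1}$ are exactly $0,1,2^k,2^k+1$, and the first two nontrivial ones cancel as you say).
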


\begin{proof}
By (\ref{eqn:phij:Delgado-Janwa:JA:16}), the constant term of $\phi_t$ is 1. Then, as a factor of $\phi_t$ over $\mathbb{F}_2[x,y]$, the constant term of
$Q'_{\alpha}(x,y)$ is also 1, for all $\alpha \in \mathbb{F}_{2^k}-\mathbb{F}_2$. By Lemma
\ref{lem:oddED:Delgado-Janwa:JA:16}, $Q'_{\alpha}(x,y)$ has an odd number of ED terms.
Then, by direct computation, for all $\alpha \in \mathbb{F}_{2^k}-\mathbb{F}_2$, the constant term of $Q'_{\alpha}(x+1,y+1)$ is zero. Thus, for all
$\alpha \in \mathbb{F}_{2^k}-\mathbb{F}_2$, \textcolor{black}{$m_p(Q'_{\alpha}(x,y)) \geq 1$.}

\textcolor{black}{
Therefore, for all $\alpha \in \mathbb{F}_{2^k}-\mathbb{F}_2$, $p \in Q'_{\alpha}(x,y)$, and if  $Q'_{\alpha}=Q_{\alpha}\overline{Q_{\alpha}}$, then
$p\in Q_{\alpha}$ and $p\in \overline{Q_{\alpha}}$. Therefore,  $p\in Q_{\alpha}$ for all suitable $\alpha$. Therefore, we conclude  that $p \in P_{\alpha}(x,y)$, as required.}

\textcolor{black}{In addition, as shown by Janwa, McGuire and Wilson in \cite{Janwa:McGuire:Wilson:AJA:95}}, $m_p(\phi_t)=2^k-2$ and the "tangent cone" factors into $2^k-2$ different linear factors.
\textcolor{black}{We conclude that, the components $P_{\alpha}$ intersect transversally at $p$.}
\end{proof}

\section{Two new families of absolutely irreducible Kasami-Welch degree polynomials}
As some applications of Theorem \ref{thm:transversal:Delgado-Janwa:JA:16} we have:

\begin{thm} \label{thm:3mod4:Delgado-Janwa:JA:16}
Let the Kasami-Welch degree polynomial $f(x)=x^{2^{2k}-2^k+1}+h(x) \in L[x]$,
where $d=\deg(h)\equiv 3{\pmod 4}$. Then $\phi(x,y)$ is absolutely irreducible.
\end{thm}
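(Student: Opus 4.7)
The plan is to argue by contradiction: assume $\phi(x,y)$ admits a nontrivial factorization $\phi = GH$ over $\overline L$, and derive a contradiction by combining the transversal decomposition of Theorem~\ref{thm:transversal:Delgado-Janwa:JA:16} with a local analysis at the singular point $p = (1,1)$. First, using \eqref{eqn:expansion-phij:Delgado-Janwa:JA:16}, write $\phi(x,y) = \phi_t(x,y) + \sum_{j=3}^{d} a_j \phi_j(x,y)$ with $\deg \phi_j = j-3$. Since $t > d$, the homogeneous component of $\phi$ of top degree $t-3$ coincides with the leading form $\phi_t^{\infty}(x,y) = \phi_t(x,y,0) = (x^t + y^t + (x+y)^t)/(xy(x+y))$. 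Consequently any factorization $\phi = GH$ induces $\phi_t^{\infty} = G^{\infty} H^{\infty}$, and by the decomposition \eqref{eqn:kasami-products :Delgado-Janwa:JA:16} into the $2^k - 2$ absolutely irreducible components $P_{\alpha}$, $G$ and $H$ must distribute the $P_{\alpha}^{\infty}$ between them.

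Next I would translate coordinates by $x \mapsto x+1$, $y \mapsto y+1$ to place the singular point $p = (1,1)$ at the origin, and compare the local expansions of the various $\phi_j$ around $p$. By Theorem~\ref{thm:transversal:Delgado-Janwa:JA:16} together with the singularity analysis of Janwa--McGuire--Wilson, $\phi_t(x+1,y+1)$ has multiplicity $m_p(\phi_t) = 2^k - 2$ at the origin, with tangent cone splitting as a product of $2^k - 2$ pairwise distinct linear forms, one supplied by each $P_{\alpha}$. The central technical step is a \emph{key local lemma}, in the spirit of Lemma~\ref{lem:oddED:Delgado-Janwa:JA:16}, asserting that when $d \equiv 3 \pmod 4$ the perturbations $a_j \phi_j(x+1,y+1)$ for $3 \leq j \leq d$ can neither lower the multiplicity below $2^k - 2$ nor merge any pair of distinct tangent lines of $\phi_t$ at $p$. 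The residue condition on $d$ would be used precisely here: Lucas-type identities for binomial coefficients modulo $2$ would show that the lowest-degree terms of each $\phi_j(x+1, y+1)$ are either of order strictly greater than $2^k - 2$, or of order exactly $2^k-2$ but already proportional to a product of tangent lines already present in $\phi_t$.

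Granting this local lemma, a Bezout count closes the argument. The hypothetical factorization $\phi = GH$ forces $m_p(G) + m_p(H) = 2^k - 2$ and forces the tangent cones of $G$ and $H$ at $p$ to partition the $2^k - 2$ transversal linear forms into disjoint subsets. Applying Bezout to the projective closures then gives $\deg G \cdot \deg H \geq m_p(G)\, m_p(H) + I_\infty(G,H)$, where $I_\infty(G,H)$ collects the intersection contributions at points on the line $z = 0$ that are forced by the factorization of the leading form established in the first paragraph. A calculation analogous to the Gold-degree Bezout bookkeeping of~\cite{Delgado:Janwa:DCC:16} should show that this sum strictly exceeds $\deg G \cdot \deg H$, the desired contradiction. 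The hardest part will be the key local lemma: the Kasami analogue of the technical core of the Gold case, where the exact combinatorial consequence of $d \equiv 3 \pmod 4$ on the tangent cone at $p$ must be verified through a careful term-by-term analysis of $\phi_j(x+1, y+1)$ for all relevant $j$.
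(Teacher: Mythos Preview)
Your ``key local lemma'' is false, and its failure is precisely what makes the paper's argument work. For $d\equiv 3\pmod 4$ one has $d-1=2\ell$ with $\ell$ odd, so by the Janwa--Wilson/Hernando--McGuire multiplicity formula $m_p(\phi_d)=2^1-2=0$; in other words $\phi_d(1,1)\neq 0$. Hence $\phi_d(x+1,y+1)$ has a nonzero constant term, and the perturbation $a_d\phi_d$ (with $a_d\neq 0$ since $d=\deg h$) contributes at order $0$, not at order $\geq 2^k-2$. Consequently $m_p(\phi)$ need not equal $2^k-2$; in fact $p$ need not lie on $\phi$ at all. Your equation $m_p(G)+m_p(H)=2^k-2$ and the ensuing Bezout bookkeeping collapse.

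The paper's proof avoids this by never computing $m_p(\phi)$. It works purely with the graded pieces of a hypothetical factorization $\phi=PQ$. The top homogeneous slice gives $P_sQ_t=\phi_t$, so by \eqref{eqn:kasami-products :Delgado-Janwa:JA:16} each of $P_s,Q_t$ is a product of some of the $P_\alpha$'s; Theorem~\ref{thm:transversal:Delgado-Janwa:JA:16} then gives $P_s(p)=Q_t(p)=0$. The slice of degree $t-3-e$ (with $e=t-d$) reads $P_sQ_{t-e}+P_{s-e}Q_t=a_d\phi_d$, and evaluating at $p$ forces $\phi_d(p)=0$, contradicting the very fact $\phi_d(p)\neq 0$ that broke your lemma. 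So the $d\equiv 3\pmod 4$ hypothesis is used not to \emph{preserve} the tangent cone of $\phi$ at $p$, but to guarantee that $\phi_d$ does \emph{not} vanish at $p$; the transversality of Theorem~\ref{thm:transversal:Delgado-Janwa:JA:16} is used only to ensure $p$ lies on every $P_\alpha$, hence on $P_s$ and $Q_t$.
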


\begin{proof}
Supposing, by the way of contradiction (as in \cite{Delgado:Janwa:ArXiv:12,Delgado:Janwa:ArXiv:16,Delgado:Janwa:DCC:16}), that $\phi(x,y)$ factor as $\phi(x,y)=P(x,y)Q(x,y)$, where $P$ and $Q$ are non constant polynomials. Then, because of the
factorization of $\phi_{2^{2k}-2^k+1}$ as a product of absolutely irreducible different factors, the following system is obtained
(see the previous references):

\begin{equation}\label{eqn:PsQt-3mod4:Delgado-Janwa:JA:16}
P_sQ_t=\prod P_{\alpha}(x,y),\,\,\,\, \alpha \in \mathbb{F}_{2^k}-\mathbb{F}_2,
\end{equation}

\begin{equation}\label{eqn:Ps-eQt-e3mod4:Delgado-Janwa:JA:16}
P_sQ_{t-e}+P_{s-e}Q_t=a_d \phi_d(x,y)
\end{equation}
where $P=P_s+P_{s-1}+...+P_0$ , $Q=Q_t+Q_{t-1}+...+Q_0$ , $2^{2k}-2^k+1 > s \geq t >0$ and $e=2^{2k}-2^k+1-d$.

By Theorem \ref{thm:transversal:Delgado-Janwa:JA:16} , $p=(1,1) \in P_{\alpha}(x,y)$ for all $\alpha \in \mathbb{F}_{2^k}-\mathbb{F}_2$. Then,-
by the absolute irreducible factorization in (\ref{eqn:PsQt-3mod4:Delgado-Janwa:JA:16}), $p \in P_s, p \in Q_t$. Then by Equation
(\ref{eqn:Ps-eQt-e3mod4:Delgado-Janwa:JA:16}), $p \in \phi_d(x,y)$. Which is a contradiction,
 since $p$ does not belong to $\phi_d(x,y)$ as demonstrated by Janwa and Wilson in \cite{Janwa:Wilson:LNCS:93}.
\end{proof}

For the case $d \equiv 1 {\pmod 4}$, it happens that $p \in \phi_d$ and $m_p(\phi_d)=2^i-2$, where $d=2^il+1$ ($l$ an odd number),
as shown by Hernando and McGuire in \cite{Hernando:McGuire:JA:11}. The following is our result for this case.

\begin{thm}  \label{thm:5mod8:Delgado-Janwa:JA:16}
Let the Kasami-Welch degree polynomial $f(x)=x^{2^{2k}-2^k+1}+h(x) \in L[x]$,
where $d=\deg(h)\equiv 5{\pmod 8}$. If $d < 2^{2k}-3(2^k)-1$ and $(\phi_{2^{2k}-2^k+1}, \phi_d)=1$, then $\phi(x,y)$ is absolutely irreducible.
\end{thm}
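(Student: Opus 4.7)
The plan is to adapt the contradiction template used in Theorem~\ref{thm:3mod4:Delgado-Janwa:JA:16}. Assuming $\phi=PQ$ with both factors non-constant (and, without loss of generality, defined over $\mathbb{F}_2$ via Galois averaging), write $P=P_s+P_{s-1}+\cdots$, $Q=Q_t+Q_{t-1}+\cdots$ with $s\geq t>0$. The top-degree coefficient of $\phi$ is $\phi_{2^{2k}-2^k+1}$ and the next non-zero one is $a_d\phi_d$, at a degree gap $e=2^{2k}-2^k+1-d$. The usual elimination at intermediate degrees, together with the pairwise coprimality of the $P_\alpha$, forces $P_{s-i}=Q_{t-i}=0$ for $1\leq i\leq e-1$ and yields the two relations
\begin{equation*}
P_sQ_t=\prod_{\alpha\in\mathbb{F}_{2^k}-\mathbb{F}_2}P_\alpha(x,y), \qquad P_sQ_{t-e}+P_{s-e}Q_t=a_d\phi_d(x,y).
\end{equation*}
Let $a$ (resp.\ $b$) be the number of $P_\alpha$-factors in $P_s$ (resp.\ $Q_t$); then $a+b=2^k-2$, $s=a(2^k+1)$, $t=b(2^k+1)$, and after relabelling, $a\geq b\geq 1$.

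By Theorem~\ref{thm:transversal:Delgado-Janwa:JA:16}, $p=(1,1)$ lies on each $P_\alpha$ simply and the tangent lines $L_\alpha$ are pairwise distinct, so $m_p(P_s)=a$, $m_p(Q_t)=b$, and $\mathrm{TC}_p(P_s),\mathrm{TC}_p(Q_t)$ are coprime products of linear forms drawn from the $2^k-2$ distinct $L_\alpha$. By Hernando--McGuire, $m_p(\phi_d)=2^i-2$ for $d=2^i\ell+1$ with $\ell$ odd; when $d\equiv 5\pmod 8$ this gives $m_p(\phi_d)=2$. For $k\geq 3$ we have $a\geq 2^{k-1}-1\geq 3$, so $m_p(P_sQ_{t-e})\geq a>2$. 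Comparing multiplicities in the second relation, the degree-$2$ contribution of the LHS must come entirely from $P_{s-e}Q_t$, forcing $b+m_p(P_{s-e})=2$ and the tangent-cone identity
\begin{equation*}
a_d\cdot\mathrm{TC}_p(\phi_d)=\mathrm{TC}_p(P_{s-e})\cdot\mathrm{TC}_p(Q_t).
\end{equation*}

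Since $Q_t\in\mathbb{F}_2[x,y]$, the set of $\alpha\in\mathbb{F}_{2^k}-\mathbb{F}_2$ with $P_\alpha\mid Q_t$ is a union of Frobenius orbits. No such orbit has size $1$ (fixed points would require $\alpha\in\mathbb{F}_2$), so $b\neq 1$; combined with $b\leq 2$, this gives $b=2$, $m_p(P_{s-e})=0$, and a single Frobenius orbit of size $2$, that is, a pair $\{\beta,\beta^2\}\subset\mathbb{F}_4-\mathbb{F}_2$. This requires $k$ to be even; for $k$ odd, no length-$2$ Frobenius orbit exists and the contradiction is immediate. For $k$ even the identity above reads $a_d\cdot\mathrm{TC}_p(\phi_d)=P_{s-e}(p)\cdot L_\beta L_{\beta^2}$. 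A direct computation, extending $\phi_5(1+u,1+v)=u^2+uv+v^2$, gives an explicit form of $\mathrm{TC}_p(\phi_d)$, and the coprimality $(\phi_{2^{2k}-2^k+1},\phi_d)=1$ is used to show this form cannot coincide with $c\cdot L_\beta L_{\beta^2}$ for any scalar $c$; the hypothesis $d<2^{2k}-3\cdot 2^k-1$ controls $\deg P_{s-e}$ so that $P_{s-e}(p)$ is unambiguously a non-zero constant in this matching.

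The main obstacle will be this last step: promoting a purely local tangent-cone identity at $p$ to a global incompatibility with the hypothesis $(\phi_{2^{2k}-2^k+1},\phi_d)=1$. Tangent directions carry only first-order local information, so one must combine the explicit calculation of $\mathrm{TC}_p(\phi_d)$ for $d\equiv 5\pmod 8$ with the transversal structure of the $P_\alpha$ at $p$ (Theorem~\ref{thm:transversal:Delgado-Janwa:JA:16}) in a way that genuinely detects a shared irreducible factor; the degree bound $d<2^{2k}-3\cdot 2^k-1$ is the quantitative input that makes this local-to-global bridge work.
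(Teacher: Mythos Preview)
Your setup is correct up through the system of two equations and the observation that $m_p(\phi_d)=2$. But from there you take a detour that leaves a genuine gap, and you miss the actual role of both the degree bound and the coprimality hypothesis.

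You correctly argue that $m_p(P_sQ_{t-e})\geq a>2$, so the multiplicity-$2$ part of the LHS must come from $P_{s-e}Q_t$, forcing $b\leq 2$ (equivalently $t\leq 2(2^k+1)$). At this point the paper's proof ends in one line: the hypothesis $d<2^{2k}-3\cdot 2^k-1$ gives
\[
e=2^{2k}-2^k+1-d>2(2^k+1)\geq t,
\]
so $t-e<0$ and the homogeneous piece $Q_{t-e}$ simply does not exist. The second relation collapses to $P_{s-e}Q_t=a_d\phi_d$. Since $Q_t$ is a non-trivial product of $P_\alpha$'s, which are factors of $\phi_{2^{2k}-2^k+1}$, this forces $\gcd(\phi_{2^{2k}-2^k+1},\phi_d)\neq 1$, contradicting the hypothesis. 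That is the entire use of the degree bound and of the coprimality assumption: together they produce a \emph{global} divisibility contradiction, not a local one.

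Your proposed route instead tries to extract a contradiction from the tangent-cone identity $a_d\cdot\mathrm{TC}_p(\phi_d)=c\cdot L_\beta L_{\beta^2}$. This cannot work as stated: coprimality of $\phi_{2^{2k}-2^k+1}$ and $\phi_d$ is a global statement and places no constraint on whether their tangent cones at a single point happen to share linear factors. You acknowledge this step as ``the main obstacle,'' and indeed it is not a bridge that can be crossed with the tools at hand. Moreover, your reduction to $b=2$ via Frobenius orbits relies on the assumption that $P,Q\in\mathbb{F}_2[x,y]$, which ``Galois averaging'' does not justify for an absolute-irreducibility argument; but this whole detour is unnecessary once you use the degree bound correctly to kill $Q_{t-e}$.
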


\begin{proof}
Supposing as in Theorem  \ref{thm:3mod4:Delgado-Janwa:JA:16} that $\phi(x,y)$ factors as $P(x,y)Q(x,y)$ and using the same arguments used there, we get the system:
\begin{equation}\label{eqn:PsQt-5mod8:Delgado-Janwa:JA:16}
P_sQ_t=\prod P_{\alpha}(x,y),\,\,\,\, \alpha \in \mathbb{F}_{2^k}-\mathbb{F}_2,
\end{equation}
\begin{equation}  \label{eqn:Ps-eQt-e-5mod8-Ps-eQt-e:Delgado-Janwa:JA:16}
P_sQ_{t-e}+P_{s-e}Q_t=a_d \phi_d(x,y)
\end{equation}
where $s,t,e$ are in the previous theorem. Let $p=(1,1)$ and let us consider the following two cases to prove the theorem.

Let $t > 2(2^k+1)$. Then using Theorem \ref{thm:transversal:Delgado-Janwa:JA:16} and the fact that $P_{\alpha}$ are absolutely irreducible polynomials of degree $2^k+1$,
from (10) we have that $m_p(Q_t) > 2$, $m_p(P_s)> 2$. This implies also that $m_p(P_sQ_{t-e}+P_{s-e}Q_t) = m_p(\phi_d(x,y))> 2$. Contradicting that
$m_p(\phi_d)=2$ (for $d \equiv 5{\pmod 8}$, $m_p(\phi_d)=2^2-2$ ).

On the other hand, let $t \leq 2(2^k+1)$. Since $d < 2^{2k}-3(2^k)-1$, then $e > 2^{2k}-2^k+1-(2^{2k}-3(2^k)-1) > 2^{k+1}+2$ and $t < e$.
The Equation (\ref{eqn:Ps-eQt-e-5mod8-Ps-eQt-e:Delgado-Janwa:JA:16}) becomes $P_{s-e}Q_t=a_d \phi_d(x,y)$, which contradicts the relatively prime hypothesis.

\end{proof}

\section{Towards the conjecture of exceptional APN functions}
As an application of Theorems        \ref{thm:3mod4:Delgado-Janwa:JA:16} and
\ref{thm:5mod8:Delgado-Janwa:JA:16}, the following two theorems contribute substantially on the hardly studied
case of the APN function conjecture, the Kasami-Welch case.

\begin{thm}  \label{thm:3mod4:APN:Delgado-Janwa:JA:16}Let the Kasami-Welch degree polynomial $f(x)=x^{2^{2k}-2^k+1}+h(x) \in L[x]$
where $\deg(h)\equiv 3{\pmod 4}$. Then $f$ is not exceptional APN.
\end{thm}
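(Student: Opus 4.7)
The plan is a short reduction combining Theorem \ref{thm:3mod4:Delgado-Janwa:JA:16} with the variety-based criterion of Section 1 (Rodier's theorem). Theorem \ref{thm:3mod4:Delgado-Janwa:JA:16} gives that the dehomogenization $\phi(x,y) = \phi(x,y,1)$ is absolutely irreducible over $\overline{\mathbb{F}_2}$, but the criterion is stated for the three-variable polynomial $\phi(x,y,z)$; the main task is therefore to deduce that $\phi(x,y,z)$ has an absolutely irreducible component defined over $L$, at which point the criterion closes the argument.

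To make this transfer, I would factor $\phi(x,y,z) = \prod_i A_i(x,y,z)$ into absolutely irreducible components over $\overline{\mathbb{F}_2}$ and specialize at $z = 1$:
\[
\phi(x,y,1) \;=\; \prod_i A_i(x,y,1).
\]
Since $\phi(x,y,1)$ is absolutely irreducible, at most one of the factors $A_i(x,y,1)$ can be non-constant; call this factor $A_1$, so that $A_1(x,y,1)$ is a scalar multiple of $\phi(x,y,1)$. No other $A_i(x,y,1)$ can vanish identically, for that would force $z+1$ to divide $\phi(x,y,z)$ and hence $\phi(x,y,1) = 0$, contradicting its absolute irreducibility. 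Therefore each other $A_i(x,y,1)$ is a nonzero constant, $A_1$ is an absolutely irreducible factor of $\phi(x,y,z)$, and a short Galois argument (the distinguishing property ``$A_i(x,y,1)$ is non-constant'' is preserved by $\mathrm{Gal}(\overline{\mathbb{F}_2}/L)$ since $1 \in L$, and $A_1$ is uniquely characterized up to normalization) shows that $A_1$ is in fact defined over $L$.

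With $A_1$ in hand, Rodier's criterion applies: the existence of an absolutely irreducible component of $\phi(x,y,z)$ over $L$ immediately yields that $f$ is not an exceptional APN function, as required.

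The core content of the theorem is already contained in Theorems \ref{thm:transversal:Delgado-Janwa:JA:16} and \ref{thm:3mod4:Delgado-Janwa:JA:16}; the only step requiring any verification is the affine-to-three-variable lifting, whose only subtle checks are the non-vanishing of $\phi(x,y,1)$ (to exclude $z+1$ as a factor of $\phi(x,y,z)$) and the Galois descent of the distinguished factor to $L$. Both are routine, so no genuine obstacle is expected.
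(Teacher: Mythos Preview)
Your proof is correct and matches the paper's approach: Theorem~\ref{thm:3mod4:APN:Delgado-Janwa:JA:16} is stated without a separate proof, as an immediate application of Theorem~\ref{thm:3mod4:Delgado-Janwa:JA:16} together with Rodier's criterion (Theorem~1). The passage from $\phi(x,y)$ to $\phi(x,y,z)$ that you carefully handle is left implicit in the paper; your Galois-descent argument is fine, though a shorter route is to note that $\deg\phi(x,y,1)=\deg\phi(x,y,z)=2^{2k}-2^k-2$ (from the factorization in Equation~(\ref{eqn:kasami-products :Delgado-Janwa:JA:16})), so any nontrivial factorization of $\phi(x,y,z)$ specializes at $z=1$ to a nontrivial factorization of $\phi(x,y,1)$, forcing $\phi(x,y,z)$ itself to be absolutely irreducible.
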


\begin{thm}  \label{thm:5mod4:APN:Delgado-Janwa:JA:16} Let the Kasami-Welch degree polynomial $f(x)=x^{2^{2k}-2^k+1}+h(x) \in L[x]$
where $\deg(h)\equiv 5 {\pmod 8}$, $d < 2^{2k}-3(2^k)-1$. If $(\phi_{2^{2^k}-2^k+1}, \phi_d)=1$, then $f$ is not exceptional APN.
\end{thm}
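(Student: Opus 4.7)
The strategy is entirely direct: this theorem is an immediate corollary of the absolute irreducibility result established in Theorem \ref{thm:5mod8:Delgado-Janwa:JA:16}, combined with Rodier's variety-based criterion for non-exceptional APN functions stated in Section~1.

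First, I would verify that the hypotheses of Theorem \ref{thm:5mod8:Delgado-Janwa:JA:16} match those assumed here. We are given $f(x) = x^{2^{2k}-2^k+1} + h(x) \in L[x]$ with $d = \deg(h) \equiv 5 \pmod{8}$, with $d < 2^{2k} - 3(2^k) - 1$, and with $(\phi_{2^{2k}-2^k+1}, \phi_d) = 1$ (reading the exponent $2^{2^k}$ in the statement as a typo for $2^{2k}$). All three hypotheses line up exactly, so Theorem \ref{thm:5mod8:Delgado-Janwa:JA:16} applies and yields that the affine part $\phi(x,y) = \phi(x,y,1)$ is absolutely irreducible over $\overline{\mathbb{F}_2}$. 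In particular, the projective hypersurface cut out by $\phi(x,y,z) = 0$ has an absolutely irreducible component defined over $L$.

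Next, I would invoke Rodier's criterion (the unlabeled theorem in Section~1, whose proof rests on the Lang--Weil and Ghorpade--Lachaud bounds for rational points on surfaces): if the variety $\phi(x,y,z) = 0$ is absolutely irreducible, or more generally has an absolutely irreducible component over $L$, then $f$ is not an exceptional APN function. Applying this directly to our $\phi$ completes the argument.

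There is essentially no new obstacle internal to this theorem; all the technical work was front-loaded into the proof of Theorem \ref{thm:5mod8:Delgado-Janwa:JA:16}, which itself depends on the transversal-intersection result Theorem \ref{thm:transversal:Delgado-Janwa:JA:16} and on the multiplicity computation $m_p(\phi_d) = 2^2 - 2 = 2$ for $d \equiv 5 \pmod{8}$ due to Hernando--McGuire. The only point worth flagging in the write-up is \emph{why} the hypotheses $d < 2^{2k} - 3(2^k) - 1$ and $(\phi_{2^{2k}-2^k+1}, \phi_d) = 1$ must be carried forward verbatim: they are precisely the conditions that rule out the second case ($t \leq 2(2^k+1)$) in the proof of Theorem \ref{thm:5mod8:Delgado-Janwa:JA:16}, where absolute irreducibility would otherwise fail to follow from the transversal intersection alone. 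Once those hypotheses are acknowledged, the implication from absolute irreducibility to non-exceptionality is a one-line appeal to Rodier.
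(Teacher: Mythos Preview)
Your proposal is correct and matches the paper's approach exactly: the paper presents this theorem as a direct application of Theorem~\ref{thm:5mod8:Delgado-Janwa:JA:16} together with Rodier's criterion from Section~1, with no additional argument given. Your identification of the exponent $2^{2^k}$ as a typo for $2^{2k}$ is also correct.
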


\textbf{Some remarks}

\textcolor{black}{In Theorem  \ref{thm:5mod4:APN:Delgado-Janwa:JA:16} , one of the conditions for $f(x)$ not to be exceptional APN is that
$(\phi_{2^{2^k}-2^k+1},\phi_d)=1$. There are many cases
for this to happen, for example, when $\phi_d$ is absolutely irreducible (this follows from the absolutely irreducible factorization of $\phi_{2^{2k}-2^k+1}(x,y)$  as given in Equation (\ref{eqn:kasami-products :Delgado-Janwa:JA:16})).
Although is well known that for $d\equiv 5{\pmod 8}$, $\phi_d$ is not always absolutely
irreducible, there are many cases where it is. In \cite{Janwa:McGuire:Wilson:AJA:95}, Janwa and Wilson proved,
using different methods including Hensel's lemma implemented on a
computer, that $\phi_d(x,y)$ is absolutely irreducible for all $3< d<100$, provided that $d$ is not a Gold
or a Kasami-Welch number. They also showed that for infinitely many values of $d$, $\phi_d$ is non-singular, and therefore absolutely irreducible. 
Recently  F{\'e}rard, in   \cite{Ferhard:AMC:14},
established sufficient conditions for $\phi_d(x,y)$ to be absolutely irreducible, when $d \equiv 5{\pmod 8}$. Among Ferard's results, by the aid of SAGE, he showed that for these type of numbers, $\phi_d(x,y)$ is absolutely irreducible for all  $d$, 13< d< 208. 
In   \cite{Ferhard:AMC:14} an infinite family of integers $d$ is given so that $\phi_d$  is not absolutely irreducible.
(We note that Hernando and McGuire, in \cite{Hernando:McGuire:JA:11}, has first showed, by using MAGMA, that $\phi_{205}(x,y)$ is not irreducible).}

\end{document}